\theoremstyle{plain}
\newtheorem{Thm}{Theorem} 
\newtheorem{Lem}{Lemma} 
\newtheorem{Cor}{Corollary} 
\theoremstyle{definition}
\newtheorem{proof}{Proof} 
\newtheorem{Def}{Definition} 
\newtheorem{Exa}{Example} 
\newcommand{\RR}{\mathbb R}
\newcommand{\ZZ}{\mathbb Z}
\newcommand{\QQ}{\mathbb Q}
\renewcommand{\tilde}{\widetilde}
\newcommand{\eps}{\varepsilon}
\renewcommand{\d}{\partial}
\renewcommand{\o}{{^\circ}} 
\newcommand{\const}{\mathrm{const}}
\newcommand{\Difff}{{\rm Diff}}
\newcommand{\Cal}{{\rm Cal}}
\renewcommand{\Im}{{\rm Im}}
\newcommand{\Hom}{{\rm Hom}}
\newcommand{\Sy}{{\cal G}} 
\newcommand{\tS}{\tilde{{\cal G}}} 
\newcommand{\cB}{{\cal B}} 
\newcommand{\cA}{{\cal A}} 
\newcommand{\cD}{{\cal D}} 
\renewcommand{\H}{{\cal H}} 
\newcommand{\cF}{{\rm Flux}}
\begin{document}

\title
{Helicity is the only invariant of incompressible flows whose derivative is continuous in $C^1$--topology}
\author{Elena A.~Kudryavtseva}
\address{Moscow State University}
\email{eakudr@mech.math.msu.su}
\date{}
\udk{}

\maketitle
\begin{fulltext}

Let $Q$ be a smooth compact orientable 3--manifold with smooth boundary $\d Q$. Let $\cB$ be the set of exact 2--forms $B\in\Omega^2(Q)$ such that $j_{\d Q}^*B=0$, where $j_{\d Q}:{\d Q}\to Q$ is the inclusion map.
The group $\cD=\Difff_0(Q)$ of self-diffeomorphisms of $Q$ isotopic to the identity acts on the set $\cB$ by $\cD\times\cB\to\cB$, $(h,B)\mapsto h^*B$. Let $\cB\o$ be the set of 2--forms $B\in\cB$ without zeros. We prove that every $\cD$--invariant functional $I:\cB\o\to\RR$ having a regular and continuous derivative with respect to the $C^1$--topology can be {\em locally} (and, if $Q=M\times S^1$ with $\d Q\ne\varnothing$, {\em globally} on the set of all 2--forms $B\in\cB\o$ admitting a cross-section isotopic to $M\times\{*\}$) expressed in terms of the helicity.

\medskip
{\bf Key words:} incompressible flow, exact divergence-free vector field, helicity, flux, topological invariants of magnetic fields.

\medskip
{\bf MSC:} 35Q35, 37A20, 37C15, 53C65, 53C80.

\markright{Helicity is the only invariant of incompressible flows}

\footnotetext[0]{This work was supported by the Russian Foundation for Basic Research (grant no.\ 15–01–06302-a) and
the program
``Leading Scientific Schools'' (grant no.\ NSh-581.2014.1).
}

\section
 {Examples of $\cD$--invariant functionals on the set $\cB$ of exact incompressible flows}

 Let us define the flax and helicity.

\begin{Def} \label {def:helic}
(A) Let $\Pi\in C_2(Q)$ be a 2--chain in $Q$ with boundary $\d\Pi\in C_1(\d Q)$. Let $\llbracket\Pi\rrbracket\in\varkappa$ be the projection of the relative homology class $[\Pi]\in H_2(Q,\d Q;\QQ)$ into the quotient space
 \begin{equation} \label {eq:varkappa}
\varkappa:= H_2(Q,\d Q;\QQ)/\Im\,{p_*} \cong \ker({j_{\d Q}})_* = \d(\varkappa),
 \end{equation}
where ${p_*}: H_2(Q;\QQ){\to} H_2(Q,\d Q;\QQ)$, $({j_{\d Q}})_*:H_1(\d Q;\QQ){\to} H_1(Q;\QQ).$ The {\em flux} on the set $\cB$ is defined by the formula $\cF:\cB\to\Hom_\QQ(\varkappa,\RR)$, $\cF(B){\llbracket\Pi\rrbracket}:=\int_\Pi B$.

(B) Let $\varkappa^\perp\subseteq H_1(\d Q;\QQ)$ be an arbitrary vector space such that $H_1(\d Q;\QQ)=\varkappa^\perp\oplus\ker({j_{\d Q}})_*$. The {\em helicity} on the set $\cB$ with respect to the subspace $\varkappa^\perp$ is defined by the formula
 \begin{equation} \label {eq:helic:d}
\H_{\varkappa^\perp}:\cB\to\RR, \qquad \H_{\varkappa^\perp}(B):=\int_Q B\wedge A, \quad B \in\cB,
 \end{equation}
where $A\in\Omega^1(Q)$ is any 1--form on $Q$ such that $dA=B$ and $\oint_\gamma A=0$ for any loop $\gamma:S^1\to\d Q$ of homology class $[\gamma]\in \varkappa^\perp$. 
\end{Def}

It can readily be seen that $\cF$ and $\H_{\varkappa^\perp}$ are well defined and $\cD$--invariant.

\begin{Exa} \label {exa:1}
Let $Q=M\times S^1$ where $M$ is a compact smooth oriented surface with nonempty boundary. Then there exists an exact positive area form 
$\omega\in\Omega^2(M)$ on $M$. Let $B\in\cB\o$ be such that $j_{M\times\{0\}}^*B=\omega$ (i.e.\ $M\times\{0\}$ is a cross-section for the
2--form $B$), $S_1,\dots,S_d\subseteq\d M$ be the boundary circles, $\pi_M:Q\to M$ and $\pi_{S^1}:Q\to S^1=\RR/\ZZ$ the projections. Since $B$ is exact, due to \cite {Jen,Ban1978} there exist a diffeomorphism $\psi\in\cD$ and a function $H\in C^\infty(Q)$ such that $B=\psi^*B_{\omega,H}$, $H|_{S_1\times S^1}\equiv0$ and $H|_{S_i\times\{t\}}=\const=:h_i(t)$ for $t\in S^1$ and  $i\in\{2,\dots,d\}$, where 
 \begin{equation} \label {eq:exa:1}
B_{\omega,H}:=\pi_M^*\omega-dH\wedge d\pi_{S^1} .
 \end{equation}
For $i\in\{1,\dots,d\}$, we fix a point $x_i\in S_i$ and paths $\gamma_{ik}\subset M$ from $x_i$ to the points $x_k$, $k\ne i$. Put $\Pi_{ik}:=\gamma_{ik}\times S^1$. The flux and helicity of the 2--form $B$ are as follows:

(A) the classes $\llbracket M\times\{0\}\rrbracket,\llbracket\Pi_{12}\rrbracket,\dots,\llbracket\Pi_{1d}\rrbracket\in\varkappa$ form a basis of $\varkappa$, and the flux
$$
\cF(B)\llbracket M\times\{0\}\rrbracket=\int_M\omega, \qquad 
\cF(B)\llbracket\Pi_{\ell k}\rrbracket=\int_{S^1}h_\ell(t)dt-\int_{S^1}h_k(t)dt;
$$

(B) for any $\ell,k\in\{1,\dots,d\}$, we take as $\varkappa^\perp$ the subspace $\varkappa^\perp_{\ell k}\subseteq H_1(\d Q;\QQ)$ generated by the homology classes of the loops $\gamma_k:=\{x_k\}\times S^1$ and $S_i\times\{0\}$, $i\in\{1,\dots,d\}\setminus\{\ell\}$. By the Stokes formula, the helicity is
 $$
\H_{\varkappa^\perp_{11}}(B) =-2\int_QH (\pi_M^*\omega)\wedge (d\pi_{S^1}) =-2\Cal_\omega(\tilde\varphi), 
 $$
 $$
\H_{\varkappa^\perp_{\ell k}}(B) - \H_{\varkappa^\perp_{\ell\ell}}(B)
= \frac12 (\H_{\varkappa^\perp_{\ell\ell}}(B) - \H_{\varkappa^\perp_{kk}}(B)) 
= \cF(B)\llbracket M\times\{0\}\rrbracket \, \cF(B)\llbracket\Pi_{\ell k}\rrbracket
 $$
for $k\ne\ell$. Here $\Cal_\omega:\tS_\omega\to\RR$ is the Calabi homomorphism \cite{Calabi70,K:magn:f}, 
$\tS_\omega$ is a universal cover of the group $\Sy_\omega$ of symplectomorphisms of the surface $(M,\omega)$ with $C^\infty$-topology, the element $\tilde\varphi\in\tS_\omega$ is determined via the 2--form (\ref {eq:exa:1}).
\end{Exa}

\section{Differentiable functionals on $\cB''\subseteq\cB$ and $\H_{\varkappa^\perp}|_{\cB''}$--connected subsets of $\cB''$}

Denote by $\cA$ (respectively $\cA_{\varkappa^\perp}$) the set of 1--forms $A\in\Omega^1(Q)$ such that $\oint_\gamma A=0$ for any loop $\gamma:S^1\to\d Q$ (respectively any loop of homology class $[\gamma]\in \varkappa^\perp$, cf.\ Definition \ref {def:helic} (B)). Let  $\Phi\in\Hom_\QQ(\varkappa,\RR)$ be a linear function on (\ref {eq:varkappa}) and 
\begin{equation} \label {eq:cB'}
\mbox{either } \ \cB':= \cF^{-1}(\Phi) \ 
\mbox{ and }\ \cA':=\cA, \quad 
\mbox{ or } \ \cB':=\cB \ 
\mbox{ and }\ \cA':=\cA_{\varkappa^\perp}.
\end{equation}
Clearly, $\cB'$ and $\cA'$ are $\cD$--invariant, and the set $\cB'-B$ is $C^0$--open in the vector space $d\cA'$ for any $B\in\cB'$.

Let $\cB''\subseteq\cB'$ be a $C^1$--open subset (for example, $\cB''=\cB'\cap\cB\o$).

\begin{Def} \label {def:diff}
A functional $I:\cB''\to\RR$ is said to be {\em differentiable} at a point $B\in\cB''$ if there is a linear functional $D_BI:\cA'\to\RR$ (referred to as the {\em derivative} of the functional $I$ at the point $B$) such that, for every smooth family of 1--forms $A_u\in\cA'$, $-\eps<u<\eps$, for which $dA_0=0$, we have
$$
\left.\frac{d}{du}\right|_{u=0}I(B+dA_u)=D_BI\left(\left.\frac{d}{du}\right|_{t=0}A_u\right).
$$ 
The derivative $D_BI$ is said to be {\em regular} if it is a regular element of the dual space $(\cA')^*$, i.e., $D_BI(A')=\int_{Q}K_I(B)\wedge A'$, $A'\in\cA'$, for some measurable 2--form $K_I(B)$ on $Q$; this 2--form is referred to as the {\em density} of the functional $D_BI$. Let $I$ be differentiable everywhere on $\cB''$ and have a regular derivative. We say that the derivative is {\em continuous with respect to the $C^k$--topology on $\cB''$} (for $k\ge0$) if its density is a continuous map  $K_I:\cB''\to\Gamma^0(\Lambda^2T^*Q)$ 
with respect to the $C^k$-topology on $\cB''$ and $C^0$--topology on $\Gamma^0(\Lambda^2T^*Q)$.
\end{Def}

\begin{Exa} \label {exa:2}
The helicity (\ref {eq:helic:d}) is differentiable everywhere on $\cB$ (and hence on $\cB'\subseteq\cB$). Its derivative is regular and has density $K_{\H_{\varkappa^\perp}}(B)=B$. Therefore, its derivative is continuous with respect to every topology on $\cB$.
\end{Exa}

\begin{Def} \label {def:diff:}
A subset $\cB'''\subseteq\cB''$ is said to be {\em $\H_{\varkappa^\perp}|_{\cB''}$--connected} if every pair of its elements with the same value $c\in\RR$ of helicity $\H_{\varkappa^\perp}$ can be joined by a piecewise smooth path in $\H_{\varkappa^\perp}^{-1}(c)\cap\cB''$.
 \end{Def}

\begin{Lem} [(examples of $\H_{\varkappa^\perp}|_{\cB''}$--connected subsets)] \label {lem:1}
{\rm(A)} If $\cB'=\cB$ in {\rm(\ref {eq:cB'})} and $B\in{\cB''}$ is such a 2-form that $\H_{\varkappa^\perp}(B)\ne0$ then every sufficiently small $C^1$--neighbourhood $\cB'''$ of the 2--form $B$ in $\cB''$ is $\H_{\varkappa^\perp}|_{\cB''}$--connected.

{\rm(B)} Let $Q=M\times S^1$ as in Example {\rm\ref {exa:1}}. Let {\rm(\ref {eq:cB'})}, and let $\cB''\subseteq\cB'\cap\cB\o$ be the set of all 2--forms $\psi^*B_{\omega,H}$ where $\psi\in\cD$ and $B_{\omega,H}\in\cB'$ as in {\rm(\ref {eq:exa:1})}. Then the whole set $\cB''$ is $\H_{\varkappa^\perp}|_{\cB''}$--connected. 
\end{Lem}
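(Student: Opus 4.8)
I would handle the two parts separately, each by an elementary construction exploiting a transformation law of the helicity.

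\emph{Part (A).} Since here $\cB'=\cB$ and $\cA'=\cA_{\varkappa^\perp}$, the helicity is quadratic under scaling: if $dA=B$ with $A\in\cA_{\varkappa^\perp}$ then $\lambda A\in\cA_{\varkappa^\perp}$ is a potential for $\lambda B$, whence $\H_{\varkappa^\perp}(\lambda B)=\lambda^2\H_{\varkappa^\perp}(B)$ for every $\lambda\in\RR$. I would use this to cancel the drift of helicity along a straight segment. Take $B_0,B_1$ in a small $C^1$-neighbourhood $\cB'''$ of $B$ with $\H_{\varkappa^\perp}(B_0)=\H_{\varkappa^\perp}(B_1)=c$ and set $B_t:=(1-t)B_0+tB_1$. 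Because $\cB$ is a vector space and $\cB\o$ is $C^0$-open, the whole segment lies in $\cB''$ once $\cB'''$ is small enough; and since $\H_{\varkappa^\perp}(B)\ne0$, the continuous function $\H_{\varkappa^\perp}(B_t)$ stays of one sign and bounded away from $0$, so $\mu(t):=\sqrt{c/\H_{\varkappa^\perp}(B_t)}$ is smooth, positive, with $\mu(0)=\mu(1)=1$. Then $\tilde B_t:=\mu(t)B_t$ is a smooth path, $C^1$-close to $B$ hence in $\cB''$, with $\tilde B_0=B_0$, $\tilde B_1=B_1$ and $\H_{\varkappa^\perp}(\tilde B_t)=\mu(t)^2\H_{\varkappa^\perp}(B_t)\equiv c$; this is the desired path in $\H_{\varkappa^\perp}^{-1}(c)\cap\cB''$.

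\emph{Part (B), reduction to models.} The strategy is to reduce to the model forms $B_{\omega,H}$ and then interpolate linearly in $H$. For the reduction, write an element as $\psi^*B_{\omega,H}$ with $\psi\in\cD$; an isotopy $\psi_s$ from $\id$ to $\psi$ produces the path $s\mapsto\psi_s^*B_{\omega,H}$, which stays in $\cB''$ (each term is of the model shape) and, by the $\cD$-invariance of $\H_{\varkappa^\perp}$, lies in a single level set. Hence it suffices to join two model forms $B_{\omega,H_0},B_{\omega,H_1}$ of equal helicity $c$.

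\emph{Part (B), interpolation.} I would take $H_s:=(1-s)H_0+sH_1$ and the path $B_{\omega,H_s}=\pi_M^*\omega-dH_s\wedge d\pi_{S^1}$, and verify three points. First, $B_{\omega,H_s}\in\cB\o$: its pullback to each slice $M\times\{t\}$ is $\omega$, since $d\pi_{S^1}$ vanishes on $TM$, so the form has no zeros. Second, $B_{\omega,H_s}\in\cB'$: the boundary normalization $H_s|_{S_i\times\{t\}}=\const$ survives the interpolation, giving $j_{\d Q}^*B_{\omega,H_s}=0$, while the flux $\cF(B_{\omega,H_s})$ is affine in $H_s$ by Example \ref{exa:1} and is therefore constant when the endpoints share a flux. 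Third, and crucially, $\H_{\varkappa^\perp}(B_{\omega,H_s})$ is \emph{affine} in $H_s$: using the potential $A=\pi_M^*\alpha-H\,d\pi_{S^1}+\beta$ (with $d\alpha=\omega$ and $\beta$ the closed correction fixing the $\varkappa^\perp$-periods), every a priori quadratic contribution to $\int_Q B_{\omega,H}\wedge A$ carries the factor $d\pi_{S^1}\wedge d\pi_{S^1}=0$, and the $\varkappa^\perp$-dependent part stays linear in $H$ because the flux $\cF(B)\llbracket M\times\{0\}\rrbracket=\int_M\omega$ entering the flux-bilinear terms of Example \ref{exa:1} does not depend on $H$. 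Granting affinity, $\H_{\varkappa^\perp}(B_{\omega,H_s})=(1-s)c+sc\equiv c$, so the path lies in $\H_{\varkappa^\perp}^{-1}(c)\cap\cB''$; concatenating with the two reductions proves that $\cB''$ is $\H_{\varkappa^\perp}|_{\cB''}$-connected.

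The step I expect to be the main obstacle is this last affinity claim: one must check that all potentially quadratic terms in $H$ truly cancel for an \emph{arbitrary} admissible complement $\varkappa^\perp$, not merely for the distinguished $\varkappa^\perp_{\ell k}$ computed in Example \ref{exa:1}; this amounts to controlling the $\beta$-dependent contributions and confirming that the only bilinear flux pairings that survive involve the $H$-independent area flux $\int_M\omega$. The remaining ingredients—convexity of $\cB$, $C^0$-openness of $\cB\o$, continuity of $\H_{\varkappa^\perp}$, and path-connectedness of $\cD$—are routine.
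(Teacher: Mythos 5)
Your part (A) is exactly the paper's argument: the same rescaled segment $\mu(t)\bigl((1-t)B_0+tB_1\bigr)$ with $\mu(t)=\sqrt{c/\H_{\varkappa^\perp}\bigl((1-t)B_0+tB_1\bigr)}$, justified by the quadratic scaling of the helicity and the $C^1$--openness of $\cB''$; nothing to add there.

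In part (B) there is a genuine gap in the reduction step. You claim that, after absorbing the diffeomorphisms by an isotopy, it ``suffices to join two model forms $B_{\omega,H_0},B_{\omega,H_1}$'' with the \emph{same} area form $\omega$. That normalization is only available in the case $\cB'=\cF^{-1}(\Phi)$, where the fixed flux forces the cross-sectional areas to agree and a Moser-type argument lets you take a common $\omega$. In the case $\cB'=\cB$ the flux is not fixed, and two model forms of equal helicity can perfectly well have cross-sections of different total area: the helicity is essentially $\int_M\omega$ paired against an $H$--linear quantity, so area can be traded against $H$ without changing $c$. The best one can arrange is $\omega$ and $\lambda\omega$ with $\lambda\ne1$, and your interpolation, which freezes $\omega$ and moves only $H$, cannot connect such a pair. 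The paper's proof addresses precisely this point: it normalizes the two forms as $\psi_0^*B_{\omega,H_0}$ and $\psi_1^*B_{\lambda\omega,H_1}$ and takes the path $\psi_u^*B_{(1-u+u\lambda)\omega,\,a(u)H_0+uH_1}$ with $a(u)=1/(1-u+u\lambda)-u/\lambda$, chosen so that the bilinearity of the helicity in the pair $(\omega,H)$ keeps the helicity identically equal to $c$ along the path (note $a(0)=1$, $a(1)=0$). Your closing worry about affinity in $H$ for a general complement $\varkappa^\perp$ is legitimate but secondary --- by Example \ref{exa:1} the correction terms are products of the $H$--independent flux $\int_M\omega$ with $H$--linear fluxes, so bilinearity in $(\omega,H)$ persists; the ingredient actually missing from your argument is the treatment of $\lambda\ne1$.
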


\begin{proof} (A) Let us join any pair $B_0,B_1\in\H_{\varkappa^\perp}^{-1}(c)\cap\cB'''$ by the path $\{((1-u)B_0+uB_1)\sqrt{c/\H_{\varkappa^\perp}((1-u)B_0+uB_1)}\}_{u\in[0,1]}$ in $\H_{\varkappa^\perp}^{-1}(c)$. This path is contained in $\cB''$, since $\cB''$ is $C^1$--open and $\H_{\varkappa^\perp}$ is $C^0$--continuous.

(B) The set $\cB''$ is $C^0$--open (and hence $C^1$--open too) in $\cB'$. In view of \cite{Jen}, every pair of 2--forms from $\H_{\varkappa^\perp}^{-1}(c)\cap\cB''$ has the form $\psi_0^*B_{\omega,H_0},\psi_1^*B_{\lambda\omega,H_1}$ for some $\psi_0,\psi_1\in\cD$, $H_0,H_1\in C^\infty(Q)$, $\lambda\in\RR_{>0}$ and a positive area form $\omega\in\Omega^2(M)$ such that $B_{\omega,H_0},B_{\lambda\omega,H_1}\in\H_{\varkappa^\perp}^{-1}(c)$. Since every pair $\psi_0,\psi_1\in\cD$ can be joined by a piecewise smooth path $\{\psi_u\}_{u\in[0,1]}$ in the group $\cD$ \cite {Jen}, we obtain the path $\{\psi_u^*B_{(1-u+u\lambda)\omega,a(u)H_0+uH_1}\}$ in $\H_{\varkappa^\perp}^{-1}(c)\cap\cB''$, which joins our 2--forms $\psi_0^*B_{\omega,H_0}$ and $\psi_1^*B_{\lambda\omega,H_1}$, where $a(u):=1/(1-u+u\lambda)-u/\lambda$, $0\le u\le 1$.
 \end{proof}

\section{Main result}

Our result was announced in \cite{K2015}. It is similar to the results of \cite {K:magn:f,Serre84}. We prove it by the technique of the paper \cite {K:magn:f}.

\begin{Thm} 
Let $\varkappa,\varkappa^\perp,\cB'$ be defined as in {\rm(\ref {eq:varkappa}), (\ref {eq:helic:d})}, {\rm(\ref {eq:cB'})}.
Let $I:\cB'\to\RR$ be a $\cD$--invariant functional on $\cB'$ differentiable on a $C^1$--open subset $\cB''\subseteq\cB'\cap\cB\o$ and having a regular and continuous derivative with respect to the $C^1$--topology on $\cB''$. Then the restriction of this functional to every $\H_{\varkappa^\perp}|_{\cB''}$--connected subset $\cB'''\subseteq\cB''$ {\rm(}e.g.\ one from Lemma {\rm\ref {lem:1})}, can be expressed using the helicity, i.e., $I|_{\cB'''}=h\circ\H_{\varkappa^\perp}|_{\cB'''}$ for some function $h:\RR\to\RR$.
\end{Thm}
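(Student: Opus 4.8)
The plan is to show that the derivative $D_BI$ of any such invariant functional is forced to be proportional to $B$ itself, which (by Example~\ref{exa:2}, where $K_{\H_{\varkappa^\perp}}(B)=B$) identifies $I$ with a function of helicity along connected level behaviour. First I would exploit $\cD$--invariance infinitesimally. Differentiating the identity $I(h^*B)=I(B)$ along a path $h_u\in\cD$ generated by a divergence-free (exact) vector field $v$ yields, via Cartan's formula $\frac{d}{du}h_u^*B=L_vB=d(\iota_vB)$ (using $dB=0$), the relation $D_BI(\iota_vB)=0$ for every such $v$ whose interior product $\iota_vB$ lies in $\cA'$. Writing $D_BI(A')=\int_Q K_I(B)\wedge A'$, this becomes $\int_Q K_I(B)\wedge\iota_vB=0$ for a large supply of vector fields $v$. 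The geometric content of this constraint is the key: it should force the density $K_I(B)$ to be pointwise a scalar multiple of $B$, i.e.\ $K_I(B)=f(B)\cdot B$ for some function, at least where $B\ne0$ (which is why we restrict to $\cB\o$).

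The heart of the argument, and the step I expect to be the main obstacle, is extracting this pointwise proportionality $K_I(B)=\lambda(B)\,B$ from the integral identity. The idea is a localization: since $B$ is nowhere zero, at each point one can choose $v$ supported near that point so that $\iota_vB$ sweeps out a rich enough family of 2--forms to detect the component of $K_I(B)$ that is \emph{not} proportional to $B$; the vanishing of all these pairings kills that component. Here the conditions $j_{\d Q}^*B=0$ and the boundary constraints built into $\cA'$ must be handled carefully near $\d Q$, and one must verify that the test fields $v$ genuinely produce admissible variations $\iota_vB\in\cA'$ (this is exactly where the flux-fixing choice in (\ref{eq:cB'}) earns its keep). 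The continuity hypothesis on $K_I$ in the $C^1$--topology is what upgrades the pointwise scalar $\lambda(B)$ from merely measurable to something we can integrate against a path.

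Once $K_I(B)=\lambda(B)\,B$ is established, I would show $\lambda(B)$ depends only on the value of the helicity. The mechanism is to integrate $D_BI$ along a piecewise smooth path inside a level set $\H_{\varkappa^\perp}^{-1}(c)\cap\cB''$: along such a path $B_u$ one has $\frac{d}{du}\H_{\varkappa^\perp}(B_u)=\int_Q B_u\wedge\dot A_u=0$ (since $K_{\H_{\varkappa^\perp}}=B$), while $\frac{d}{du}I(B_u)=\int_Q K_I(B_u)\wedge\dot A_u=\lambda(B_u)\int_Q B_u\wedge\dot A_u=0$ by proportionality. Hence $I$ is \emph{constant} on each such path, so by the defining property of an $\H_{\varkappa^\perp}|_{\cB''}$--connected set $\cB'''$, the value $I(B)$ depends only on $c=\H_{\varkappa^\perp}(B)$. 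This exactly produces a well-defined function $h:\RR\to\RR$ with $I|_{\cB'''}=h\circ\H_{\varkappa^\perp}|_{\cB'''}$, completing the proof. The two technical points requiring care are the admissibility of the test variations near the boundary and the passage from the integral constraint to pointwise proportionality; the final level-set integration is then essentially formal.
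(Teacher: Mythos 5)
There is a genuine gap, and it sits exactly at the step you dismiss as "essentially formal." Your localization argument does yield the pointwise proportionality $K_I(B)=\lambda\,B$ (the identity $\int_Q K_I(B)\wedge \iota_vB=0$ for all compactly supported $v$ kills the two components of $K_I(B)$ transverse to $B$, by the same local computation in coordinates with $B=dx\wedge dy$ that the paper performs in its Corollary). But $\lambda$ is at this stage only a continuous \emph{function on $Q$}, and your final computation
$\int_Q K_I(B_u)\wedge\dot A_u=\lambda(B_u)\int_Q B_u\wedge\dot A_u$
pulls $\lambda(B_u)$ out of the integral as if it were a constant. Nothing in your argument forces $\lambda$ to be constant on $Q$, and if it is not, the level sets of $I$ and of $\H_{\varkappa^\perp}$ need not coincide. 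Establishing this constancy is the actual crux of the theorem. The paper does it in two stages: first, the invariance of $K_I(B)$ under the \emph{stabilizer} of $B$ in $\cD$ (Lemma \ref{lem:2} plus the shear maps $w\mapsto w+(0,0,f(w))$ in the Corollary) shows $\lambda$ is constant along integral curves of $\ker B$ --- note that your infinitesimal identity alone does not even give this, since $B\wedge\iota_vB\equiv0$ means the constraint carries no information about the variation of $\lambda$ along the kernel foliation; second, by Bessa's theorem \cite{Bessa} the field $\overline B$ is $C^1$--approximated by exact, flux-preserving, topologically transitive divergence-free fields $\overline B_n$, for which a dense orbit forces $\lambda_I(B_n^s)\equiv\const$, and the $C^1$--continuity of $B\mapsto K_I(B)$ passes this constancy to the limit.

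This also corrects your reading of the hypotheses: the continuity of the derivative with respect to the $C^1$--topology is not there to "upgrade $\lambda$ from measurable to integrable along a path"; it is precisely what makes the approximation by transitive fields usable, and it is the reason the theorem's title speaks of $C^1$--continuity. Your infinitesimal route to $K_I(B)=\lambda B$ is a perfectly good (and slightly more direct) substitute for the paper's finite-stabilizer argument for that one step, and your concluding level-set integration is fine once $\lambda$ is known to be a genuine constant; but without the transitivity/genericity input the proof does not close.
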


\begin{Lem} \label {lem:2}
Suppose that $I:\cB'\to\RR$ is a $\cD$--invariant functional differentiable at a point $B\in\cB''$. Then $D_BI(A')=D_BI(\psi^*A')$ for every 1--form $A'\in\cA'$ and for every diffeomorphism $\psi\in\cD$ such that $\psi^*B=B$.
\end{Lem}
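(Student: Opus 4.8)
The plan is to exploit the $\cD$--invariance of $I$ together with the fact that $\psi$ fixes $B$, producing two one--parameter test families of $1$--forms that are forced to give the same value of the derivative.

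Fix an arbitrary $A'\in\cA'$ and consider the linear family $A_u:=uA'$, $-\eps<u<\eps$. Since $A_0=0$ we have $dA_0=0$, and $\left.\frac{d}{du}\right|_{u=0}A_u=A'$. As $\cB''$ is $C^1$--open and contains $B$, while $B+dA_u=B+u\,dA'$ depends affinely on $u$, we have $B+dA_u\in\cB''$ for all sufficiently small $|u|$; thus $I(B+dA_u)$ is defined near $u=0$, and differentiability of $I$ at $B$ gives $\left.\frac{d}{du}\right|_{u=0}I(B+dA_u)=D_BI(A')$.

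Now push this family forward by $\psi$. Because $\psi^*B=B$ and $\psi^*$ commutes with $d$, we obtain $\psi^*(B+dA_u)=\psi^*B+d(\psi^*A_u)=B+d(\psi^*A_u)$. The $\cD$--invariance of $I$ then yields $I(B+d(\psi^*A_u))=I(\psi^*(B+dA_u))=I(B+dA_u)$ for all small $|u|$. The family $\psi^*A_u=u\,\psi^*A'$ lies in $\cA'$ (which is $\cD$--invariant), satisfies $d(\psi^*A_0)=0$, and has derivative $\psi^*A'$ at $u=0$; hence it is an admissible test family, and differentiating the last identity at $u=0$ gives
$$
D_BI(\psi^*A')=\left.\frac{d}{du}\right|_{u=0}I(B+d(\psi^*A_u))=\left.\frac{d}{du}\right|_{u=0}I(B+dA_u)=D_BI(A'),
$$
which is the assertion.

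The argument is essentially formal once the definitions are unwound; the only points requiring care --- and hence the main (mild) obstacle --- are checking that the perturbed forms $B+dA_u$ and their $\psi$--pullbacks remain inside the $C^1$--open set $\cB''$ on which $I$ is defined and differentiable, so that \emph{both} one--parameter families are genuinely admissible, and verifying that $\psi^*$ preserves $\cA'$ so that $u\mapsto u\,\psi^*A'$ is a legitimate test family. Both follow from the $\cD$--invariance of $\cA'$ (and of $\cB'$) and the $C^1$--openness of $\cB''$ recorded before Definition \ref{def:diff}.
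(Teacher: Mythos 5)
Your proof is correct and follows essentially the same route as the paper: use the $\cD$--invariance of $I$ together with $\psi^*B=B$ to get $I(B+dA_u)=I(B+d(\psi^*A_u))$, then differentiate at $u=0$. The only difference is that you specialize to the linear test family $A_u=uA'$ and explicitly check admissibility (openness of $\cB''$, $\cD$--invariance of $\cA'$), which the paper leaves implicit.
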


\begin{proof}
In the notation of Definition \ref {def:diff}, we have $I(B+dA_t)=I(B+\psi^*dA_t)=I(B+d\tilde A_t)$ for $\tilde A_t:=\psi^*A_t$, and $d\tilde A_0=0$. Differentiating the relation $I(B+dA_t)=I(B+d\tilde A_t)$ proved above with respect to $t$ at $t=0$, we obtain the desired equality for $A'=\frac{d}{dt}|_{t=0}A_t$. This completes the proof of the lemma.
 \end{proof}

\begin{Cor} 
Suppose that, under the assumptions of Lemma {\rm\ref {lem:2}}, the derivative $D_BI$ at the point $B$ is regular and its density $K_I(B)$ is a continuous 2--form on $Q$. Then $K_I(B)=\psi^* K_I(B)$. If the 2--form $B$ has no zeros {\rm(}i.e.\ $B\in\cB\o${\rm)} then  $K_I(B)=\lambda_I(B)\, B$ for some function $\lambda_I(B)\in C(Q)$ that is constant on every integral curve of the field of kernels of the 2--form $B$.
\end{Cor}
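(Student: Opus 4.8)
The plan is to first upgrade the invariance of the derivative provided by Lemma \ref{lem:2} to an invariance of its density, and then to extract the pointwise algebraic consequence. Since $D_BI$ is regular, for every $\psi\in\cD$ with $\psi^*B=B$ and every $A'\in\cA'$ we have $D_BI(A')=\int_Q K_I(B)\wedge A'$, and, using that $\cA'$ is $\cD$--invariant (so $\psi^*A'\in\cA'$), also $D_BI(\psi^*A')=\int_Q K_I(B)\wedge\psi^*A'$. As $\psi\in\Difff_0(Q)$ is an orientation-preserving self-diffeomorphism of $Q$ of degree $1$, the change of variables $\int_Q\psi^*\alpha=\int_Q\alpha$ applied to the $3$--form $\alpha=(\psi^{-1})^*K_I(B)\wedge A'$ gives $\int_Q K_I(B)\wedge\psi^*A'=\int_Q(\psi^{-1})^*K_I(B)\wedge A'$. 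Combining this with the identity $D_BI(A')=D_BI(\psi^*A')$ of Lemma \ref{lem:2} yields $\int_Q\big(K_I(B)-(\psi^{-1})^*K_I(B)\big)\wedge A'=0$ for all $A'\in\cA'$. Since $\cA'$ contains every smooth $1$--form with compact support in the interior of $Q$, and the pairing $\alpha\mapsto C\wedge\alpha$ is pointwise nondegenerate for a nonzero continuous $2$--form $C$ on the $3$--manifold $Q$, the fundamental lemma of the calculus of variations forces $K_I(B)=(\psi^{-1})^*K_I(B)$; as $\psi^{-1}$ ranges over all $B$--preserving elements of $\cD$, this is the first assertion $K_I(B)=\psi^*K_I(B)$.

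Now assume $B\in\cB\o$. Fixing a volume form $\mu$ on $Q$, let $X$ be the smooth nonvanishing vector field defined by $\iota_X\mu=B$; then $\iota_XB=0$, so $X$ spans the one-dimensional field of kernels of $B$. From $j_{\d Q}^*B=0$ one checks that $\ker B_p\subseteq T_p\d Q$ at every boundary point, whence $X$ is tangent to $\d Q$; its flow $\phi_t$ is therefore complete, preserves $Q$, and lies in $\cD$. The idea is to exploit the invariance $\psi^*K_I(B)=K_I(B)$ for a sufficiently rich supply of $B$--preserving $\psi\in\cD$. The key observation is that $fX$ is $B$--preserving for every $f\in C^\infty(Q)$, because $L_{fX}B=d(\iota_{fX}B)+\iota_{fX}(dB)=d(f\,\iota_XB)=0$, using $dB=0$ and $\iota_XB=0$.

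To prove $K_I(B)=\lambda_I(B)\,B$ pointwise, fix $p\in Q$ and choose $f$ with $f(p)=0$ and $df_p=\xi$, where $\xi(X_p)=1$ and $\ker\xi=:W$ is a $2$--plane transverse to $X_p$; the flow $\psi_s$ of $fX$ then fixes $p$, and its differential at $p$ equals $\exp(s\,(Df X)_p)$, which scales $\langle X_p\rangle$ by $e^{s}$ and fixes $W$. Decomposing $K_I(B)_p$ in the basis of $\Lambda^2T_p^*Q$ adapted to $T_pQ=\langle X_p\rangle\oplus W$, the identity $\psi_s^*K_I(B)=K_I(B)$ at $p$ multiplies the two components involving the covector dual to $X_p$ by $e^{s}$ while fixing the component in $\Lambda^2W^*$; taking $s\ne0$ kills the former, so $K_I(B)_p\in\Lambda^2W^*=\langle B_p\rangle$, i.e.\ $K_I(B)_p=\lambda_I(B)(p)\,B_p$. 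Since $B$ has no zeros, $\lambda_I(B)=K_I(B)/B\in C(Q)$ is well defined. Finally, applying $\phi_t^*K_I(B)=K_I(B)$ and $\phi_t^*B=B$ to $K_I(B)=\lambda_I(B)\,B$ gives $\lambda_I(B)\circ\phi_t=\lambda_I(B)$, so $\lambda_I(B)$ is constant along the orbits of $\phi_t$, which are precisely the integral curves of the field of kernels of $B$. The point requiring care is that each auxiliary map is genuinely an element of $\cD$: one takes $f$ supported in a small ball about $p$, so that $\psi_s$ is compactly supported, preserves $B$, and is isotopic to $\id$ through $B$--preserving diffeomorphisms, while the completeness and boundary-tangency of $X$ guarantee $\phi_t\in\cD$.
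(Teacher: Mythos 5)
Your proof is correct, and its first half is the paper's argument almost verbatim: both deduce $\int_Q(\psi^*K_I(B)-K_I(B))\wedge(\cdot)=0$ from Lemma \ref{lem:2} by a change of variables and then invoke the freedom to take $A'\in\cA\subseteq\cA'$ supported near interior points together with continuity of $K_I(B)$. For the second half the underlying idea is also the same --- exploit invariance of $K_I(B)$ under $B$--preserving diffeomorphisms that shear along the kernel direction of $B$ --- but your execution differs. The paper works in explicit flow-box coordinates with $B|_U=dx\wedge dy$, applies the maps $w\mapsto w+(0,0,f(w))$, and kills the coefficients $L,M$ by plugging in concrete functions such as $f_1(w)=x-x_0$; the independence of $N$ from $z$ then gives constancy along the kernel curves. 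You instead argue coordinate-freely: the vector field $fX$ (with $i_X\mu=B$) preserves $B$ for every $f$, and linearizing its flow at a zero $p$ of $f$ with $df_p(X_p)=1$ shows that $\psi_s^*$ scales the two components of $K_I(B)_p$ involving the covector dual to $X_p$ by $e^s$, forcing them to vanish; constancy of $\lambda_I(B)$ along kernel curves then comes from invariance under the flow of $X$ itself. Your version buys two small things: it does not need the local normal form $B|_U=dx\wedge dy$ (only the pointwise linear algebra of $\Lambda^2T_p^*Q$ relative to $\ker B_p$), and since $X$ is tangent to $\d Q$ it treats boundary points directly, whereas the paper proves the identity on $Q\setminus\d Q$ and extends by continuity. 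The paper's version is more elementary in that it avoids the flow-linearization step. Both are complete proofs.
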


\begin{proof}
The equality of Lemma \ref {lem:2} can be represented in the form $\int_{Q}K_I(B)\wedge A'=\int_{Q}K_I(B)\wedge \psi^*A'$. But the left-hand side of this equality equals $\int_{Q}(\psi^*K_I(B))\wedge \psi^*A'$. Thus $\int_{Q}(\psi^*K_I(B)-K_I(B))\wedge \psi^*A'=0$. Since the 1--form $A'\in\cA'$ is arbitrary (and, in particular, one can take $A'\in\cA\subseteq\cA'$ supported in an arbitrarily small neighbourhood of an inner point of $Q$) and the 2--form $K_I(B)$ is continuous on $Q$, we obtain the first desired equality $K_I(B)=\psi^* K_I(B)$.

If $B$ has no zeros then any point of $Q\setminus\d Q$ has a neighbourhood $U\approx(-\eps,\eps)^3$ in $Q\setminus\d Q$ with regular coordinates $w=(x,y,z)\in(-\eps,\eps)^3$ such that $B|_U=dx\wedge dy$. By the equality proved above, the 2--form $K_I(B)|_U=:L(w)dy\wedge dz+M(w)dz\wedge dx+N(w)dx\wedge dy$ is invariant under transformations $\psi:w\mapsto\tilde w:=w+(0,0,f(w))$, for every function $f\in C^\infty((-\eps,\eps)^3)$ with a compact support and $\d f/\d z>-1$, where $L,M,N\in C((-\eps,\eps)^3)$. We observe that
 $$
\left(\psi^*K_I(B)- K_I(B)\right)|_U = (L(\tilde w)-L(w))dy\wedge dz +(M(\tilde w)-M(w))dz\wedge dx +
$$
$$
+(N(\tilde w)-N(w))dx\wedge dy
+(L(\tilde w)dy-M(\tilde w)dx) \wedge df(w) .
 $$
For any point $w_0=(x_0,y_0,z_0)\in(-\eps,\eps)^3$, take a function $f_1$ such that $f_1(w)=x-x_0$ in a neighbourhood of the point $w_0$. Since $f_1(w_0)=0$, we have $0=(\psi^*K_I(B)- K_I(B))|_{w=w_0}=L(w_0)dy\wedge dx$, thus $L(w_0)=0$.
One shows in a similar way that $M(w_0)=0$. Therefore $L\equiv M\equiv 0$, thus $K_I(B)|_U=N(w)\, B|_U$. 

Thus the relation $K_I(B)=\lambda_I(B)\, B$ is proved on $Q\setminus\d Q$. Hence it holds on $Q$ too, since $B$ has no zeros. 
Since $0=(\psi^*K_I(B)- K_I(B))|_U= (N(\tilde w)-N(w))B|_U$ and $f$ is arbitrary, the function $\lambda_I(B)|_U=N(x,y,z)$ does not depend on $z$, i.e.\ it is constant on integral curves of the field $\ker B$.
This completes the proof of the corollary.
\end{proof}

\begin{proof}[of the theorem]
%
Let us show that, for every 2--form $B\in\cB''$, the function $\lambda_I(B)\in C(Q)$ from the corollary is constant on $Q$. 
Recall that a vector field is said to be {\em topologically transitive} if it has an everywhere dense integral curve. Let $\mu$ be a volume form on $Q$. By a result \cite{Bessa}, the vector field $\overline B$ with $i_{\overline B}\mu=B$ can be $C^1$--approximated by a sequence $\{\overline B_n\}$ of topologically transitive divergence-free $C^1$--vector fields on $Q$ tangent to $\d Q$. Since $\overline B$ is exact, it follows from the proof in \cite{Bessa} that $\overline B_n$ can be chosen to be exact and having the same flux as $\overline B$. Every such a 2--form $B_n$ can be $C^1$--appro\-xi\-ma\-ted by a smooth 2--form $B_n^s\in\cB'$. Since $\cB''$ is $C^1$--open in $\cB'$, we have $B_n^s\in\cB''$. 

Thus, for every pair of points $w,w'\in Q$, there are sequences $\{B_n^s\}$ in $\cB''$ and $\{w_n\},\{w_n'\}$ in $Q$ such that $B_n^s\to B$ in the $C^1$--topology, $w_n\to w$, $w_n'\to w'$, and every pair $w_n,w_n'$ is contained in one integral curve of $\overline B_n^s$. Since $\lambda_I(B_n^s)$ is constant on every integral curve of $\overline B_n^s$, we have $\lambda_I(B_n^s)(w_n)=\lambda_I(B_n^s)(w_n')$. Since the map $K_I:\cB''\to\Gamma^0(\Lambda^2T^*Q)$ is continuous, it follows from the corollary and absence of zeros of every $B_1\in\cB''$ that the map $\lambda_I:\cB''\to C(Q)$ is continuous with respect to the $C^1$-topology on $\cB''$ and $C^0$-topology on $C(Q)$. Thus $\lambda_I(B)(w)$ and $\lambda_I(B)(w')$ are limits of $\{\lambda_I(B_n^s)(w_n)=\lambda_I(B_n^s)(w_n')\}$, so they coincide, thus $\lambda_I(B)=\const$.

We have $D_BI(A')=\lambda_I(B)\int_{Q}B\wedge A'=\lambda_I(B)\,D_B\H_{\varkappa^\perp}(A')$. Thus, the functional $I|_{\cB''}$ is locally constant on every level set of the helicity. Since the set $\cB'''\subseteq\cB''$ is $\H_{\varkappa^\perp}|_{\cB''}$--connected (cf.\ Definition \ref {def:diff:}), the functional $I|_{\cB'''}$ is constant on $\H_{\varkappa^\perp}^{-1}(c)\cap\cB'''$, i.e., it is equal to some constant $h(c)$ depending only on $c\in\RR$ (for some function $h:\RR\to\RR$). Thus, $I(B)=h(\H_{\varkappa^\perp}(B))$ for every $B\in\cB'''$. This completes the proof of the theorem.
\end{proof}

The author wishes to express gratitude  to G.\ Hornig for stating the problem and useful discussions, L.V.\ Polterovich and D.\ Peralta-Salas for their interest, useful discussions and indicating the papers \cite {BonCro2004} and \cite {Bessa}.

\end{fulltext}

\renewcommand {\refname}{References}

\end{document}